\newtheorem{them}{Theorem}[section]
\newtheorem{lem}[them]{Lemma}
\newtheorem{cor}[them]{Corollary}
\theoremstyle{definition}
\newtheorem{defn}[them]{Definition}
\newtheorem{Exam}[them]{Example}
\newtheorem{Que}[them]{Question}
\numberwithin{equation}{section}
\providecommand{\AMS}{$\mathcal{A}$\kern-.1667em%
\lower.25em\hbox{$\mathcal{M}$}\kern-.125em$\mathcal{S}$}
\begin{document}

\title[multiplier algebras]{Which multiplier algebras are $W^*$-algebras?}

\author[C. Akemann]{Charles A. Akemann}

\address{Department of Mathematics, University of California, Santa Barbara, CA 93106, USA}
\email{akemann@math.ucsb.edu}

\author[ M. Amini]{Massoud Amini}

\address{School of Mathematics, Tarbiat Modares University, Tehran 14115 134,
 Iran, and School of Mathematics, Institute for Research in Fundamental
Sciences (IPM), Tehran 19395-5746, Iran}
\email{mamini@modares.ac.ir}

\author[ M.B. Asadi]{Mohammad B. Asadi}

\address{School of Mathematics, Statistics and Computer Science,
 College of Science, University of Tehran, Enghelab Avenue, Tehran,
  Iran, and School of Mathematics, Institute for Research in Fundamental Sciences (IPM),
Tehran 19395-5746, Iran}

\email{mb.asadi@khayam.ut.ac.ir}

\subjclass[2010]{Primary 46L10, 47L30; Secondary 46L08}%
\keywords{Hilbert $C^*$-modules, Morita equivalence, multiplier algebras,
operator algebras, $W^*$-algebras}
\thanks{The second and third authors were in part supported by a grant from IPM (91430215 \& 91470123)}
%\dedicatory{}

%\commby{}%
% ----------------------------------------------------------------
\begin{abstract}
We consider the question of when the multiplier algebra $M(\mathcal{A})$ of a $C^*$-algebra $\mathcal{A}$ is a $ W^*$-algebra, and  show that it holds for a stable $C^*$-algebra exactly when it is a $C^*$-algebra of compact operators. This implies that if for every Hilbert $C^*$-module $E$ over a $C^*$-algebra $\mathcal{A}$,
the algebra $B(E)$ of
adjointable operators on $E$ is a $ W^*$-algebra, then $\mathcal{A}$ is a $C^*$-algebra of compact
operators.

Also we show that a unital $C^*$-algebra $\mathcal{A}$ which is Morita equivalent to a $ W^*$-algebra must be a $ W^*$-algebra.
\end{abstract}
\maketitle
% ----------------------------------------------------------------
\section{Introduction}

The main theme of this paper is around the question of when the multiplier algebra $M(\mathcal{A})$ of a
$C^*$-algebra $\mathcal{A}$ is a $ W^*$-algebra? For separable $ C^*$-algebras, it holds exactly when $\mathcal{A}$ is a $C^*$-algebra of compact
operators \cite[Theorem 2.8]{Ake}. For general $C^*$-algebras, we get two partial results in this direction. First we give an affirmative answer for stable $C^*$-algebras and deduce that if for every Hilbert $C^*$-module $E$ over $\mathcal{A}$,
the algebra $B(E)$ of
adjointable operators on $E$ is a $ W^*$-algebra, then $\mathcal{A}$ is a $C^*$-algebra of compact
operators. This is related to our question (with a much stronger assumption) as for $E=A$ with its canonical Hilbert $\mathcal{A}$-module structure, $B(E)= M(\mathcal{A})$.
Second we show that if $M(\mathcal{A})$ is Morita equivalent to a $ W^*$-algebra, then it is a $ W^*$-algebra. This is also related to our question, as if $\mathcal{A}$ is a $C^*$-algebra of compact operators, then $M(\mathcal{A})$ is  a $ W^*$-algebra.

The two partial answers take into account the notions of Hilbert $C^*$-algebras and Morita equivalence which are somewhat historically related.
In 1953, Kaplansky introduces Hilbert $C^*$-modules to prove that
derivations of type I $AW^*$-algebras are inner. Twenty years
later, Hilbert $C^*$-modules appeared in the pioneering
work of Rieffel \cite{Rie}, where he employed them to study (strong) Morita equivalence of $C^*$-algebras.
Paschke studied Hilbert $C^*$-modules as a generalization of Hilbert
spaces \cite{Pas}.

Hilbert $C^*$-modules and Hilbert spaces differ in many aspects, such as existence of orthogonal complements for submodules (subspaces), self duality,
existence of orthogonal basis, adjointability of bounded operators, etc. However, when $\mathcal{A}$ is a $C^*$-algebra of compact
operators, then Hilbert
$\mathcal{A}$-modules behave like Hilbert spaces in having the above properties. Indeed these properties characterize $C^*$-algebras of compact
operators \cite{Aram,Fra,Mag,Sch}.

\section{$C^*$-algebras of compact operators}

In this section we give some characterizations of  $C^*$-algebras of compact operators using properties of multiplier algebras. We also show that these are characterized as $C^*$-algebras $\mathcal{A}$ for which
the algebra $B(E)$ of all adjointable
operators is a $W^*$-algebra, for any Hilbert $\mathcal{A}$-module $E$.

\begin{defn} A $C^*$-algebra $\mathcal{A}$ is called a $C^*$-algebra of compact operators if there exists a Hilbert
space $H$ and a (not necessarily surjective) $*$-isomorphism from $\mathcal{A}$ to $K(H)$, where $K(H)$ denotes the space of compact operators on $H$.
\end{defn}

This is exactly how Kaplansky characterized C*-algebras that were dual rings \cite[Theorem 2.1, p. 222]{K} (see also \cite{A}).

\begin{them} For a  $C^*$-algebra $\mathcal{A}$, the following are equivalent:

($i$) $\mathcal{A}$ is a $C^*$-algebra of compact operators.

($ii$) The strict topology on the unit ball of $M(\mathcal{A})$ is the same as the strong$^*$-topology (viewing $M(\mathcal{A})\subseteq \mathcal{A}^{**}$, the second dual of $\mathcal{A}$).
\end{them}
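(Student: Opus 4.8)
The plan is to prove the two implications separately, after one preliminary remark valid for \emph{every} $C^*$-algebra: on bounded subsets of $M(\mathcal{A})$, strict convergence already implies strong$^*$ convergence. Indeed, writing $\pi_u$ for the universal representation, so that $M(\mathcal{A})\subseteq\mathcal{A}^{**}=\pi_u(\mathcal{A})''$, if $\|x_\lambda\|\le 1$ and $\|x_\lambda a\|\to 0$, $\|a x_\lambda\|\to 0$ for all $a\in\mathcal{A}$, then $\|x_\lambda\pi_u(a)\eta\|=\|\pi_u(x_\lambda a)\eta\|\le\|x_\lambda a\|\,\|\eta\|\to 0$, and the vectors $\pi_u(a)\eta$ ($a\in\mathcal{A}$) are dense in the universal Hilbert space, so boundedness gives $x_\lambda\to 0$ strongly$^*$ (and symmetrically for the adjoints). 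Hence condition (ii) amounts to the single assertion that, on the unit ball of $M(\mathcal{A})$, strong$^*$ convergence conversely forces strict convergence.

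For (i) $\Rightarrow$ (ii): assuming (i), I would use the structure of $C^*$-algebras of compact operators, $\mathcal{A}\cong c_0\text{-}\bigoplus_i K(H_i)$, to get $M(\mathcal{A})\cong \ell^\infty\text{-}\bigoplus_i B(H_i)$ and, computing successive duals, $\mathcal{A}^{**}\cong \ell^\infty\text{-}\bigoplus_i B(H_i)$ as well, so that $M(\mathcal{A})=\mathcal{A}^{**}$. For the remaining direction: given $\|x_\lambda\|\le 1$ with $x_\lambda\to 0$ strongly$^*$ and given $a\in\mathcal{A}$, approximate $a$ in norm by a finite-rank element $a'=\sum_{k\le n}|\xi_k\rangle\langle\eta_k|$; then $\|x_\lambda a'\|\le\sum_k\|x_\lambda\xi_k\|\,\|\eta_k\|\to 0$ and $\|a' x_\lambda\|\to 0$ by the same computation on adjoints, and an $\varepsilon/3$ approximation (using $\|x_\lambda\|\le 1$) yields $x_\lambda\to 0$ strictly. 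Together with the preliminary remark, this is (ii).

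For (ii) $\Rightarrow$ (i): the core is to deduce $M(\mathcal{A})=\mathcal{A}^{**}$ from (ii), after which one appeals to structure theory. I would use: (a) the unit ball of $M(\mathcal{A})$ is complete in the strict topology --- a bounded strictly Cauchy net $(y_\lambda)$ has, for each $a$, norm limits $L(a)=\lim y_\lambda a$ and $R(a)=\lim a y_\lambda$ in $\mathcal{A}$, which form a double centralizer of norm $\le 1$ to which $(y_\lambda)$ converges strictly; and (b) by the Kaplansky density theorem the unit ball of $\mathcal{A}$ is strong$^*$-dense in that of $\mathcal{A}^{**}$. Granting (ii), the strict and strong$^*$ topologies agree on $(M(\mathcal{A}))_1$, so by (a) this set is strong$^*$-complete, hence strong$^*$-closed in $(\mathcal{A}^{**})_1$; by (b) it then equals $(\mathcal{A}^{**})_1$, and therefore $M(\mathcal{A})=\mathcal{A}^{**}$, i.e.\ $\mathcal{A}$ is a (two-sided) ideal of its enveloping von Neumann algebra. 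One concludes by the known characterization that a $C^*$-algebra which is an ideal of its bidual is exactly a $C^*$-algebra of compact operators (equivalently, a dual, or annihilator, $C^*$-algebra in Kaplansky's sense).

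The main obstacle I anticipate is the last step of (ii) $\Rightarrow$ (i): extracting ``$\mathcal{A}$ is a $C^*$-algebra of compact operators'' from ``$\mathcal{A}$ is an ideal of $\mathcal{A}^{**}$''. If one does not invoke this as a black box, the work lies in von Neumann structure theory --- showing $\mathcal{A}^{**}$ can carry no nonzero continuous or $\mathrm{I}_\infty$ summand and that its discrete type $\mathrm{I}$ part forces $\mathcal{A}$ to be a $c_0$-direct sum of elementary $C^*$-algebras. By contrast, the completeness fact (a), the density fact (b), and the implication (i) $\Rightarrow$ (ii) --- which reduces to the classical coincidence of the strict and strong$^*$-operator topologies on bounded subsets of $B(H)$ --- are routine.
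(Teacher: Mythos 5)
Your proposal is correct and follows essentially the same route as the paper: for (i)$\Rightarrow$(ii) both use the decomposition $\mathcal{A}\cong c_0\text{-}\bigoplus K(H_i)$, $M(\mathcal{A})\cong\ell^\infty\text{-}\bigoplus B(H_i)$ with finite-rank approximation, and for (ii)$\Rightarrow$(i) both combine Kaplansky density with strict completeness of $M(\mathcal{A})$ to get $M(\mathcal{A})=\mathcal{A}^{**}$ and then invoke the classical characterization of $C^*$-algebras that are ideals in their biduals (the paper cites Akemann--Pedersen--Tomiyama for this last step). Your preliminary observation that strict convergence implies strong$^*$ convergence on bounded sets for \emph{every} $C^*$-algebra is a modest streamlining (it also supplies the identification of the strict limit with $b$ in the converse direction, which the paper leaves implicit), but it does not change the substance of the argument.
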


\begin{proof}
 Assume that ($i$) holds. Then $\mathcal{A}\cong c_0$-$\sum\bigoplus_\alpha K(H_\alpha)$. Let $a_\beta\to 0$ in the strict topology of the unit
ball of $M(\mathcal{A}) \cong \ell^\infty$-$\sum\bigoplus_\alpha B(H_\alpha)$. Without loss of generality, we may assume that $a_\beta\geq 0$, for all $\beta$. Let $\eta\in \bigoplus_\alpha H_\alpha$ be a unit vector with $\eta_\alpha=0$ except for finitely many $\alpha$. Let $p_\alpha$ be the rank
one projection onto the non-zero $\eta_\alpha$ and $p_\alpha = 0$, otherwise. Then  $p =\sum p_\alpha\in\mathcal{A}$, thus $\|a_\beta p\|\to  0$. Therefore $\|a_\beta\eta\|\to 0$, and the same holds for any $\eta$ in the unit ball of $\bigoplus_\alpha H_\alpha$, as $\{a_\beta\}$ is norm bounded. Hence $a_\beta\to 0$ in the strong$^*$ topology.

Conversely if $a_\beta\geq  0$ and $a_\beta\to 0$ in the strong$^*$ topology. As above, for any rank one projection
$p\in \mathcal{A}$, $\|a_\beta p\|=\|pa_\beta\|\to 0$. Thus $p$ can be replaced by any finite linear combination of such minimal
projections, and this set is dense in $\mathcal{A}$. Since $\{a_\beta\}$ is norm bounded, $a_\beta\to 0$ in the strict
topology. This shows that ($i$) implies ($ii$).

Now assume that ($ii$) holds. By \cite[Theorem 2.8]{Ake}, we need only to prove that $M(\mathcal{A}) = \mathcal{A}^{**}$.
For any positive element $b$ in the unit ball of $\mathcal{A}^{**}$, there is a net $\{a_\beta\}$ in the unit ball of $\mathcal{A}$ that converges to $b$ in strong$^*$ topology. Thus the net is strong$^*$ Cauchy, and hence convergent in the strict topology to an element
of $M(\mathcal{A})$, as $M(\mathcal{A})$ is the completion of $\mathcal{A}$ in the strict topology \cite[Theorem 3.6]{B}. Therefore $b\in M(\mathcal{A})$, and we are done.
\end{proof}

Another characterization of $C^*$-algebras of compact operators could be obtained as a non unital version of the following result of J.A.
Mingo in \cite{Min}, where he investigates the
multipliers of stable $C^*$-algebras.

\begin{lem}
Suppose that $H$ is a separable infinite dimensional Hilbert space and
$\mathcal{A}$ is a unital $C^*$-algebra such that the multiplier
algebra $M(\mathcal{A} \otimes K(H))$ is a $W^*$-algebra. Then
$\mathcal{A}$ is a finite dimensional $C^*$-algebra.
\end{lem}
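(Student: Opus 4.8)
The plan is to realise $\mathcal{A}$ as a corner of $\mathcal{A}\otimes K(H)$ and then reduce to the behaviour of multiplier algebras of stable $C^*$-algebras. Put $\mathcal{B}=\mathcal{A}\otimes K(H)$; since $\mathcal{A}$ is unital and $H$ is separable and infinite dimensional, $\mathcal{B}$ is a stable $\sigma$-unital $C^*$-algebra, and for a minimal projection $e_{11}$ of $K(H)$ the projection $e=1_{\mathcal{A}}\otimes e_{11}$ lies in $\mathcal{B}$ with $e\mathcal{B}e\cong\mathcal{A}$. Because $\mathcal{B}$ is an essential ideal of $M(\mathcal{B})$ and $e\in\mathcal{B}$, we also get $eM(\mathcal{B})e=e\mathcal{B}e\cong\mathcal{A}$; equivalently, writing $\mathcal{B}=\mathcal{K}(\mathcal{H}_{\mathcal{A}})$ and $M(\mathcal{B})=\mathcal{L}(\mathcal{H}_{\mathcal{A}})$ for the standard Hilbert $\mathcal{A}$-module $\mathcal{H}_{\mathcal{A}}$, the element $e$ is the projection onto the first coordinate and $eM(\mathcal{B})e=M(\mathcal{A})=\mathcal{A}$.

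The heart of the matter is to show that $M(\mathcal{B})$ being a $W^*$-algebra forces $\mathcal{B}$ to be a $C^*$-algebra of compact operators. When $\mathcal{A}$ is separable this is immediate from \cite[Theorem 2.8]{Ake} applied to the separable stable algebra $\mathcal{B}$. For general $\mathcal{A}$ one can run the argument through the preceding theorem: it suffices to check that the strict and the strong$^*$ topologies agree on the unit ball of $M(\mathcal{B})$, the latter taken inside $\mathcal{B}^{**}$. That strict convergence of a bounded net implies strong$^*$ convergence is routine, as $x_{\lambda}\to x$ strictly already gives $\|(x_{\lambda}-x)b\|\to 0$ for every $b\in\mathcal{B}$. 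The reverse inclusion is the main obstacle and is exactly where the hypothesis is used: one must show that a bounded strong$^*$-Cauchy net in $M(\mathcal{B})$ is strict-Cauchy, and here the point is that such a net, being in particular a bounded monotone net, converges in the $W^*$-algebra $M(\mathcal{B})$, hence converges strictly since $M(\mathcal{B})$ is the strict completion of $\mathcal{B}$ \cite[Theorem 3.6]{B}; making the comparison of the two strong$^*$-topologies (the intrinsic one of $M(\mathcal{B})$ and the one inherited from $\mathcal{B}^{**}$) precise is the delicate part of this step.

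Granting that $\mathcal{B}$ is a $C^*$-algebra of compact operators, the proof finishes at once: $\mathcal{A}\cong e\mathcal{B}e$ is then a corner, cut down by a projection, of a $c_0$-direct sum $c_0\text{-}\bigoplus_{\alpha}K(H_{\alpha})$, and any projection in such an algebra is supported on finitely many summands and of finite rank in each, so $\mathcal{A}$ is finite dimensional. One could also argue without the structure theorem but keeping the corner step: $eM(\mathcal{B})e\cong\mathcal{A}$ shows that $\mathcal{A}$ is already a $W^*$-algebra, and if $\mathcal{A}$ were infinite dimensional it would contain an orthogonal sequence of nonzero projections $(p_{n})$, whence $\xi\mapsto\sum_{n}p_{n}\xi_{n}$ is a bounded $\mathcal{A}$-linear functional on $\mathcal{H}_{\mathcal{A}}$ that is not represented by any vector of $\mathcal{H}_{\mathcal{A}}$, so $\mathcal{H}_{\mathcal{A}}$ fails to be self-dual; converting this into a failure of $\mathcal{L}(\mathcal{H}_{\mathcal{A}})=M(\mathcal{B})$ to be a $W^*$-algebra is again the hard step, since the naive monotone nets built from the $p_{n}$ do have suprema in $\mathcal{L}(\mathcal{H}_{\mathcal{A}})$ and one is forced to pass to the self-dual completion of $\mathcal{H}_{\mathcal{A}}$.
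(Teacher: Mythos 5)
The paper does not actually prove this lemma --- it is quoted from Mingo \cite{Min} --- so your proposal has to stand on its own, and as written it does not close. The framing is sound: the corner reduction via $e=1_{\mathcal{A}}\otimes e_{11}$ is correct (so $\mathcal{A}\cong eM(\mathcal{B})e$ is a $W^*$-algebra), and the final step (a projection in a $C^*$-algebra of compact operators cuts out a finite dimensional corner) is fine. But the entire content of the lemma sits in the step you twice label ``the delicate/hard part'' and never carry out: extracting anything from the hypothesis that $M(\mathcal{B})$ is a $W^*$-algebra. Your first route contains an outright false assertion --- a bounded strong$^*$-Cauchy net is not ``in particular a bounded monotone net'' --- and even for genuinely monotone nets, order convergence in the abstract $W^*$-algebra $M(\mathcal{B})$ does not imply strict convergence: the relation between the intrinsic $\sigma$-weak topology of $M(\mathcal{B})$ and the strict topology of $\mathcal{B}$ is exactly what is at issue, and verifying condition ($ii$) of Theorem 2.2 for $\mathcal{B}$ is essentially equivalent to the conclusion you want. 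Your second route names the right witness (the column of orthogonal projections $(p_n)$, giving a bounded unrepresented functional on $H_{\mathcal A}$) but stops precisely where the work begins, since ``not self-dual'' does not formally imply ``$B(E)$ is not a $W^*$-algebra.''

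For the record, the second route does close, and seeing how shows what was missing. If $\mathcal{A}\cong eM(\mathcal{B})e$ were infinite dimensional it would contain nonzero mutually orthogonal projections $(p_n)_{n\ge1}$. Represent $M:=M(\mathcal{B})$ faithfully and normally, so that bounded $\sigma$-strongly Cauchy sequences converge in $M$. Put $S_N=\sum_{n\le N}p_n\otimes e_{n1}\in\mathcal{B}$; then $(S_M-S_N)^*(S_M-S_N)=\sum_{N<n\le M}p_n\otimes e_{11}$, and since the bounded increasing sequence $\sum_{n\le N}p_n\otimes e_{11}$ converges $\sigma$-strongly in $M$, the sequence $(S_N)$ is bounded and $\sigma$-strongly Cauchy, hence converges to some $S\in M$. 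Multiplication by a fixed element is $\sigma$-strongly continuous on bounded sets, so $S=S(1\otimes e_{11})\in M\mathcal{B}\subseteq\mathcal{B}$, while with $u_N=\sum_{n\le N}1\otimes e_{nn}$ one gets $S^*(1-u_N)S=\sum_{n>N}p_n\otimes e_{11}$, whence $\|(1-u_N)S\|=1$ for every $N$. Since $(u_N)$ is an approximate unit for $\mathcal{B}=\mathcal{A}\otimes K(H)$, every $b\in\mathcal{B}$ satisfies $\|(1-u_N)b\|\to0$; this contradiction forces $\mathcal{A}$ to be finite dimensional. The essential ingredient, absent from your write-up, is that the $W^*$-hypothesis supplies $\sigma$-strong limits of bounded Cauchy sequences \emph{inside} $M(\mathcal{B})$, and that the ideal property then drags those limits down into $\mathcal{B}$, where they violate the approximate unit condition.
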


We recall that a projection $p$ in a $C^*$-algebra $\mathcal{A}$
is called finite dimensional if $p \mathcal{A} p$ is a finite
dimensional $C^*$-algebra. To prove a non unital version of Mingo's result, we need some lemmas. The first lemma is well-known, see for instance \cite[Corollary 1.2.37]{AM}.

\begin{lem}
If $\mathcal{A}$ is a  $C^*$-algebra and $p$ is a projection in the multiplier
algebra $M(\mathcal{A})$, then $M(p\mathcal{A}p)\cong pM(\mathcal{A})p$, as $C^*$-algebras.
\end{lem}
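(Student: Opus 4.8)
The plan is to carry out the comparison inside the enveloping von Neumann algebra $\mathcal{A}^{**}$. Recall that $M(\mathcal{A})$ is canonically the idealizer $\{x\in\mathcal{A}^{**}:x\mathcal{A}\subseteq\mathcal{A}\text{ and }\mathcal{A}x\subseteq\mathcal{A}\}$ of $\mathcal{A}$ in $\mathcal{A}^{**}$, and that for the hereditary $C^{*}$-subalgebra $B:=p\mathcal{A}p$ one has $B^{**}=p\mathcal{A}^{**}p$, so that $M(B)$ is, in the same way, the idealizer $\{y\in p\mathcal{A}^{**}p:yB\subseteq B\text{ and }By\subseteq B\}$ of $B$ in $p\mathcal{A}^{**}p$. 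Since $p$ is a projection in $M(\mathcal{A})$, $pM(\mathcal{A})p$ is a norm-closed $*$-subalgebra of $M(\mathcal{A})$, unital with unit $p$, and we regard it inside $p\mathcal{A}^{**}p$. Thus both $pM(\mathcal{A})p$ and $M(B)$ sit inside $p\mathcal{A}^{**}p$, and it suffices to show that they coincide as subsets; the identity map is then the asserted $*$-isomorphism.

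One inclusion is a direct computation: for $m\in M(\mathcal{A})$ and $a\in\mathcal{A}$ we have $pmp\in p\mathcal{A}^{**}p$ and $(pmp)(pap)=p\big(m(pap)\big)p\in p\mathcal{A}p=B$, because $m(pap)\in\mathcal{A}$; symmetrically $(pap)(pmp)\in B$. As $B$ is norm-closed, $(pmp)B\subseteq B$ and $B(pmp)\subseteq B$, so $pM(\mathcal{A})p\subseteq M(B)$.

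For the reverse inclusion I would use a Hilbert module, which also fits the theme of the paper. Put $E:=\overline{p\mathcal{A}}$, a closed right $\mathcal{A}$-submodule of $\mathcal{A}$; with $\langle x,y\rangle:=x^{*}y$ it is a right Hilbert $\mathcal{A}$-module, and a short computation shows that the rank-one operators $z\mapsto (pa)\langle pb,z\rangle$ act on $E$ as left multiplication by $pab^{*}p$, so that the algebra $K(E)$ of ``compact'' operators is precisely $B=p\mathcal{A}p$ acting on $E$. Hence $B(E)\cong M(K(E))\cong M(B)$, and we may identify $M(B)=B(E)$. Now take $T\in B(E)$. Because $T$ is $\mathcal{A}$-linear, $L(b):=T(pb)$ satisfies $L(b_{1}b_{2})=T\big((pb_{1})b_{2}\big)=L(b_{1})b_{2}$; and because $T$ is adjointable, a computation with the inner product shows that $L$ together with $R(b):=\big(T^{*}(pb^{*})\big)^{*}$ satisfies $b_{1}L(b_{2})=R(b_{1})b_{2}$ for all $b_{1},b_{2}\in\mathcal{A}$. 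Hence there is $m\in M(\mathcal{A})$ with $mb=T(pb)$ and $bm=R(b)$ for all $b\in\mathcal{A}$. Since $T(pb)\in E$ and $T^{*}(pb^{*})\in E$, one reads off $pm=m=mp$, so $m=pmp\in pM(\mathcal{A})p$; and $m$ and $T$, viewed in $p\mathcal{A}^{**}p$, induce the same left and right multiplications on $B$, so $m=T$ by essentiality of $B$ in $B^{**}$. Therefore $M(B)\subseteq pM(\mathcal{A})p$, and the two algebras coincide.

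The one genuinely substantive point is this last inclusion: an abstract multiplier of the corner $p\mathcal{A}p$ must be shown to arise from a multiplier of $\mathcal{A}$ compressed by $p$. The Hilbert-module device reduces it to the routine verification that $(L,R)$ is an $M(\mathcal{A})$-multiplier, but the crucial ingredient there is the adjoint $T^{*}$: by itself $L(b)=T(pb)$ yields only a one-sided (left) multiplier of $\mathcal{A}$, which need not be compressible into $pM(\mathcal{A})p$, and it is adjointability of $T$ that supplies the matching right multiplier and hence a genuine element of $M(\mathcal{A})$. An alternative, staying entirely within $\mathcal{A}^{**}$, is to take $T\in p\mathcal{A}^{**}p$ with $TB\subseteq B$ and $BT\subseteq B$ and use the polar decomposition of $pa$ in $\mathcal{A}^{**}$, whose positive left factor $(p\,aa^{*}\,p)^{1/2}$ already lies in $B$, to conclude $T(pa)=T\big((p\,aa^{*}\,p)^{1/2}\,u\big)\in\mathcal{A}$; I find the module formulation cleaner.
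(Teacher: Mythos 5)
Your proof is correct. For comparison: the paper offers no proof of this lemma at all, merely citing it as well known (Corollary 1.2.37 of Ara--Mathieu), so you have supplied an argument where the authors supply only a reference. Your framing --- realizing both $pM(\mathcal{A})p$ and $M(p\mathcal{A}p)$ as idealizers inside $p\mathcal{A}^{**}p$, the latter because $p\mathcal{A}p$ is a hereditary subalgebra whose support projection is $p$ itself --- is essentially the textbook route behind the cited corollary, and the inclusion $pM(\mathcal{A})p\subseteq M(p\mathcal{A}p)$ is the same easy computation in every treatment. What is distinctive is your handling of the hard inclusion via the Hilbert module $E=p\mathcal{A}$: the identifications $K(E)\cong p\mathcal{A}p$ and $B(E)\cong M(K(E))$ convert an abstract multiplier of the corner into an adjointable operator, and, as you rightly isolate, adjointability is exactly what supplies the right-centralizer half $R$ of the double centralizer $(L,R)$ on $\mathcal{A}$; the one-sided map $L(b)=T(pb)$ alone would not suffice. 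This fits the paper's Hilbert-module theme and buys a self-contained proof from the standard facts $K(E)\cong p\mathcal{A}p$ and $B(E)\cong M(K(E))$. Two small points would be worth writing out in a final version: faithfulness of the left-multiplication action of $p\mathcal{A}p$ on $E$, so that $K(E)$ really is $p\mathcal{A}p$ and not a proper quotient (this follows from nondegeneracy, using an approximate unit of $\mathcal{A}$); and, in your closing aside, $T\bigl((paa^*p)^{1/2}\bigr)u$ lands in $\mathcal{A}$ only after the further factorization $(paa^*p)^{1/2}=(paa^*p)^{1/4}(paa^*p)^{1/4}$, since $cu\in\mathcal{A}$ is guaranteed for $c$ in the hereditary subalgebra generated by $(paa^*p)^{1/2}$, not for an arbitrary element of $p\mathcal{A}p$. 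Neither point affects the validity of the main argument.
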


\begin{lem}
Let $H$ be a Hilbert space and $\mathcal{A}$ be a  $C^*$-algebra. If $\mathcal{A}\otimes K(H)$ is  $C^*$-algebra of compact operators, then so is $\mathcal{A}$.
\end{lem}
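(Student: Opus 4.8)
The plan is to exhibit $\mathcal{A}$ as a $C^*$-subalgebra of $\mathcal{A}\otimes K(H)$ and then invoke the trivial observation that any $C^*$-subalgebra of a $C^*$-algebra of compact operators is again one: a faithful $*$-representation into $K(\mathcal{H})$ restricts to a faithful $*$-representation of the subalgebra, so the restriction still maps onto a subalgebra of $K(\mathcal{H})$.

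First, fix a unit vector $\xi\in H$ and let $e\in K(H)$ be the rank-one projection onto $\mathbb{C}\xi$. Since $e=e^*=e^2$, the map $a\mapsto a\otimes e$ is an isometric $*$-homomorphism of $\mathcal{A}$ onto the $C^*$-subalgebra $\mathcal{A}\otimes\mathbb{C}e$ of $\mathcal{A}\otimes K(H)$ (the tensor product here is unambiguous, $K(H)$ being nuclear). In fact $\mathcal{A}\otimes\mathbb{C}e$ is a hereditary subalgebra of $\mathcal{A}\otimes K(H)$ — the corner cut out by the multiplier projection $1_{M(\mathcal{A})}\otimes e$ — since $eK(H)e=\mathbb{C}e$ gives $(1\otimes e)\big(\mathcal{A}\otimes K(H)\big)(1\otimes e)=\mathcal{A}\otimes\mathbb{C}e$. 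Either way, $\mathcal{A}\cong\mathcal{A}\otimes\mathbb{C}e\subseteq\mathcal{A}\otimes K(H)$.

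Second, by hypothesis $\mathcal{A}\otimes K(H)$ is a $C^*$-algebra of compact operators, so by definition there is a Hilbert space $\mathcal{H}$ and a faithful $*$-homomorphism $\pi\colon\mathcal{A}\otimes K(H)\to K(\mathcal{H})$. Composing $\pi$ with $a\mapsto a\otimes e$ yields a faithful $*$-homomorphism $\mathcal{A}\to K(\mathcal{H})$, and hence $\mathcal{A}$ is a $C^*$-algebra of compact operators, as required.

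I do not expect a genuine obstacle here; the only points meriting a word are the (automatic) interpretation of the tensor product and the fact that the class of $C^*$-algebras of compact operators is closed under passage to $C^*$-subalgebras, both immediate from the definition. A reader preferring to argue through the structure theorem can instead write $\mathcal{A}\otimes K(H)\cong c_0\text{-}\bigoplus_\alpha K(H_\alpha)$, so that $M(\mathcal{A}\otimes K(H))\cong\ell^\infty\text{-}\bigoplus_\alpha B(H_\alpha)$, and with $p=1\otimes e=(p_\alpha)_\alpha$ compute $\mathcal{A}\cong p\big(\mathcal{A}\otimes K(H)\big)p\cong c_0\text{-}\bigoplus_\alpha K(p_\alpha H_\alpha)$, which is visibly a $C^*$-algebra of compact operators.
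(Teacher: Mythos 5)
Your proof is correct, and it takes a genuinely different and more elementary route than the paper's. You realize $\mathcal{A}$ isometrically as the corner $\mathcal{A}\otimes\mathbb{C}e=(1\otimes e)\bigl(\mathcal{A}\otimes K(H)\bigr)(1\otimes e)$ for a rank-one projection $e$, and then observe that under Definition 2.1 (a not necessarily surjective $*$-isomorphism into some $K(\mathcal{H})$) the class of $C^*$-algebras of compact operators is tautologically closed under passing to $C^*$-subalgebras; the lemma then follows in two lines from the definition. The paper instead argues by contraposition: assuming $\mathcal{A}$ is not of compact operators, it produces $b\in\mathcal{A}^{+}$ whose spectral projection $\xi_1(b)$ at $\{1\}$ is not finite dimensional, uses Theorem 2.2 to show that the decreasing sequence $(b\otimes q)^n$ converges strictly, hence in norm, to the projection $\xi_1(b)\otimes q$ inside $\mathcal{A}\otimes K(H)$, and contradicts the fact that a projection in a $C^*$-algebra of compact operators must cut out a finite dimensional corner, since $(\xi_1(b)\otimes q)\bigl(\mathcal{A}\otimes K(H)\bigr)(\xi_1(b)\otimes q)\cong\xi_1(b)\mathcal{A}\xi_1(b)\otimes qK(H)q$ is infinite dimensional. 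Your argument buys brevity and independence from Theorem 2.2 and from the spectral-projection characterization; the paper's version exercises the strict-topology machinery it has just built and stays within the ``all projections are finite dimensional'' picture used throughout Section 2. Two cosmetic remarks: the closure-under-subalgebras step is ``immediate'' only because of the particular definition adopted here (under the equivalent $c_0$-direct-sum description it would require Kaplansky's structure theorem, which your final alternative paragraph in effect supplies), and both your proof and the paper's implicitly assume $H\neq\{0\}$ so that a rank-one projection exists.
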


\begin{proof}
Suppose not. Then there is an element $b \in\mathcal{A}^{+}$ such that the spectral projection $\xi_1(b)$ of
$b$ corresponding to $\{1\}$ is not finite dimensional in $\mathcal{A}$. Let $q$ be a one-dimensional projection in $K(H)$.
Then $(b\otimes q)^n$ is a decreasing sequence in the unit ball of the C*-algebra $\mathcal{A}\otimes K(H)$ of compact operators. By Theorem 2.2 it converges strictly, hence (because it is decreasing) in norm to $\xi_1(b)\otimes q \in \mathcal{A}$. Because $\mathcal{A}\otimes K(H)$ is a $C^*$-algebra of compact operators, the projection   $\xi_1(b)\otimes q$
 must be finite rank, but
$$(\xi_1(b)\otimes q)(\mathcal{A}\otimes K(H))(\xi_1(b)\otimes q)=\xi_1(b)\mathcal{A}\xi_1(b)\otimes qK(H))q,$$ and the dimension of  $\xi_1(b)\mathcal{A}\xi_1(b)$ is
not finite by our assumption about $b$.
\end{proof}

The next theorem is known for separable $C^*$-algebras \cite{Ake}, here we prove it with separability replaced by stability.

\begin{them}
If $\mathcal{A}$ is a stable $C^*$-algebra such that the multiplier
algebra $M(\mathcal{A})$ is a $W^*$-algebra, then $\mathcal{A}$ is a $C^*$-algebra of compact operators.
\end{them}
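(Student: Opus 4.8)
The plan is to exploit stability to realise, for each projection of $\mathcal{A}$, a stabilised copy of its corner as a corner of $\mathcal{A}$ itself, apply Mingo's lemma there, and then use that $M(\mathcal{A})$ is a $W^*$-algebra to assemble these local conclusions into a global one.

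First I would fix a $*$-isomorphism $\mathcal{A}\cong\mathcal{A}\otimes K(H)$ (with $H$ separable infinite-dimensional) and let $q\in\mathcal{A}$ be an arbitrary projection. Transporting $q\otimes 1_{B(H)}$ back along the isomorphism produces a projection $\widetilde q\in M(\mathcal{A})$ with $\widetilde q\,\mathcal{A}\,\widetilde q\cong q\mathcal{A}q\otimes K(H)$. By Lemma 2.4, $M\bigl(q\mathcal{A}q\otimes K(H)\bigr)\cong\widetilde q\,M(\mathcal{A})\,\widetilde q$, a corner of the $W^*$-algebra $M(\mathcal{A})$ and hence again a $W^*$-algebra; since $q\mathcal{A}q$ is unital, Lemma 2.3 (Mingo's result) forces $q\mathcal{A}q$ to be finite-dimensional. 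So every projection in $\mathcal{A}$ is finite-dimensional. On the other hand $M(\mathcal{A})$, being a $W^*$-algebra, has real rank zero, so its closed ideal $\mathcal{A}$ does as well; thus $\mathcal{A}$ has an increasing approximate unit $(q_\lambda)$ of projections, and since each $q_\lambda\mathcal{A}q_\lambda$ is finite-dimensional, $\mathcal{A}=\overline{\bigcup_\lambda q_\lambda\mathcal{A}q_\lambda}$ is an AF algebra all of whose projection-corners are finite-dimensional. (Lemmas 2.4 and 2.5 are what license moving freely between an algebra, its corners and their stabilisations in this argument.)

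It remains to show that such an $\mathcal{A}$ is a $C^*$-algebra of compact operators, and this is where I expect the real work to lie. Each $q_\lambda$ is a finite orthogonal sum of minimal projections of $\mathcal{A}$ (a minimal projection of the finite-dimensional algebra $q_\lambda\mathcal{A}q_\lambda$ is minimal in $\mathcal{A}$), so the minimal projections of $\mathcal{A}$ have supremum $1$ in $\mathcal{A}^{**}$; being minimal also in $M(\mathcal{A})\subseteq\mathcal{A}^{**}$, they have supremum $1$ there too, so $M(\mathcal{A})$ is an atomic $W^*$-algebra, say $M(\mathcal{A})=\prod_\beta B(K_\beta)$. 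As $\mathcal{A}$ is an essential ideal of $M(\mathcal{A})$ it contains each $K(K_\beta)$, hence $c_0\text{-}\bigoplus_\beta K(K_\beta)$; and no projection of $\mathcal{A}$ may have an infinite-rank component or infinitely many nonzero components, since its corner would then be infinite-dimensional; using real rank zero once more, $\mathcal{A}$ is the closed linear span of projections lying in $c_0\text{-}\bigoplus_\beta K(K_\beta)$, so $\mathcal{A}\cong c_0\text{-}\bigoplus_\beta K(K_\beta)$. The main obstacle is exactly this passage from ``every projection has a finite-dimensional corner'' to ``$C^*$-algebra of compact operators'': Mingo's lemma only detects unital corners, so one must use the full force of the hypothesis that $M(\mathcal{A})$---not merely each of its corners---is a $W^*$-algebra, through real rank zero (to force enough projections into the ideal $\mathcal{A}$) and the atomic/diffuse decomposition (to rule out a projectionless part).
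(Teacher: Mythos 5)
Your argument is correct, and its engine is exactly the paper's: realise $M(q\mathcal{A}q\otimes K(H))$ as the corner $(q\otimes 1)M(\mathcal{A}\otimes K(H))(q\otimes 1)$ of a $W^*$-algebra via Lemma 2.4 and stability, then apply Mingo's Lemma 2.3 to the unital algebra $q\mathcal{A}q$ to conclude it is finite dimensional. Where you genuinely diverge is in how you produce projections and how you finish. The paper manufactures them inside $\mathcal{A}$ as spectral projections: for $0\le a\le 1$ in $\mathcal{A}$, the spectral projection $p$ of $a$ for $[s,t]$ with $s>0$ exists in the $W^*$-algebra $M(\mathcal{A})$, satisfies $g(a)p=p$ for a continuous $g$ vanishing at $0$ and equal to $1$ on $[s,t]$, hence lies in $\mathcal{A}$; since $a$ is norm-approximated by linear combinations of such $p$, Kaplansky's characterization of dual $C^*$-algebras finishes the proof immediately. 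You instead invoke real rank zero of $W^*$-algebras and its inheritance by the closed ideal $\mathcal{A}$ to get an approximate unit of projections, and then rebuild the $c_0$-$\bigoplus_\beta K(K_\beta)$ picture by hand through atomicity of $M(\mathcal{A})$. This works---the facts you use (real rank zero for von Neumann algebras, its passage to hereditary subalgebras, approximate units of projections, the atomic decomposition $\prod_\beta B(K_\beta)$) are all standard---but it imports Brown--Pedersen machinery the paper avoids, and your final paragraph essentially re-proves the Kaplansky criterion ``closed span of finite-dimensional projections $\Rightarrow$ $C^*$-algebra of compact operators'' in this special case: once each $q_\lambda\mathcal{A}q_\lambda$ is finite dimensional and $q_\lambda aq_\lambda\to a$, you could stop by citing that criterion. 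Two minor cautions: in a non-$\sigma$-unital real rank zero algebra the approximate unit of projections need not be increasing (harmless here), and the supremum of the minimal projections is more safely computed inside $M(\mathcal{A})$ itself (if $e$ dominates them all then $(1-e)\mathcal{A}=0$, so $e=1$ by essentiality) rather than in $\mathcal{A}^{**}$, where normality of the inclusion $M(\mathcal{A})\subseteq\mathcal{A}^{**}$ is not automatic.
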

\begin{proof}

In order for the $C^*$-algebra $\mathcal{A}$ to be a
$C^*$-algebra of compact operators, it is necessary and
sufficient that every positive element in $\mathcal{A}$ can be
approximated by a finite linear combination of finite dimensional
projections. Let $a$ be a positive element in $\mathcal{A}$ and $0
\leq a \leq 1$. Since the multiplier algebra $M(\mathcal{A})$
is a $W^*$-algebra, we can define $p \in M(\mathcal{A})$ as the
spectral projection of $a$, corresponding to an interval of the form
$[s , t]$ where $0 < s < t$. It suffices to show that $p
\mathcal{A} p$ is finite dimensional. Let $g: [0,1] \rightarrow
[0,1]$ be a continuous function vanishing at $0$, such that $g(r)
= 1$ for all $r \in [s , t]$. Then $g(a) \in A$ and $g(a)p = p$.
Hence $p \in A$.

Now let $H$ be a separable infinite dimensional Hilbert space. Since $\mathcal{A}$ is a stable $C^*$-algebra, $M(\mathcal{A})=M(\mathcal{A}\otimes K(H))$ is a $W^*$-algebra and by Lemma 2.4,
\begin{align*}M(p\mathcal{A}p\otimes K(H))&=M((p\otimes 1)(\mathcal{A}\otimes K(H))(p\otimes 1))\\&=(p\otimes 1)M(\mathcal{A}\otimes K(H))(p\otimes 1)\end{align*}
is a $W^*$-algebra. Therefore by Lemma 2.3, $p$ is finite rank.
\end{proof}

The non unital version of the Mingo's lemma follows.

\begin{cor}
Suppose that $H$ is a separable infinite dimensional Hilbert space and
 $\mathcal{A}$ is a $C^*$-algebra such that the multiplier
algebra $M(\mathcal{A}\otimes K(H))$ is a $W^*$-algebra, then $\mathcal{A}$ is a $C^*$-algebra of compact operators.
\end{cor}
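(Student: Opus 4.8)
The plan is to bootstrap from Theorem 2.6, since the corollary is, after one observation, a purely formal consequence of that theorem together with Lemma 2.5. The key observation is that $\mathcal{A}\otimes K(H)$ is automatically \emph{stable} when $H$ is separable and infinite-dimensional. First I would check exactly this: for such $H$ the Hilbert space $H\otimes H$ is again separable and infinite-dimensional, hence unitarily isomorphic to $H$, and this unitary induces a $*$-isomorphism $K(H)\otimes K(H)\cong K(H\otimes H)\cong K(H)$ (the spatial $C^*$-tensor product, which is the only one in sight since $K(H)$ is nuclear). Tensoring with $\mathcal{A}$ and using associativity of the spatial tensor product gives $(\mathcal{A}\otimes K(H))\otimes K(H)\cong \mathcal{A}\otimes\big(K(H)\otimes K(H)\big)\cong \mathcal{A}\otimes K(H)$, so $\mathcal{A}\otimes K(H)$ is stable. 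It is worth spelling these identifications out, because this is the one place where the hypothesis ``$H$ separable and infinite-dimensional'' is genuinely used.

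With stability established, I apply Theorem 2.6 to the $C^*$-algebra $\mathcal{B}:=\mathcal{A}\otimes K(H)$. By hypothesis $M(\mathcal{B})=M(\mathcal{A}\otimes K(H))$ is a $W^*$-algebra, and $\mathcal{B}$ is stable by the previous step, so Theorem 2.6 yields that $\mathcal{B}=\mathcal{A}\otimes K(H)$ is a $C^*$-algebra of compact operators.

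Finally I invoke Lemma 2.5, which applies with the given $H$ (it permits an arbitrary Hilbert space): since $\mathcal{A}\otimes K(H)$ is a $C^*$-algebra of compact operators, so is $\mathcal{A}$. This completes the argument.

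As for the main obstacle: there is no deep step remaining, since all the substance has been placed in Theorem 2.6 and Lemma 2.5. The only point requiring a little care is the stability claim, namely making the identification $K(H)\otimes K(H)\cong K(H)$ and its compatibility with tensoring by $\mathcal{A}$ precise, and observing that ``separable and infinite-dimensional'' is exactly what guarantees $H\otimes H\cong H$; everything else is immediate.
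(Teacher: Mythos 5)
Your proof is correct and follows exactly the paper's own argument: note that $\mathcal{A}\otimes K(H)$ is stable, apply Theorem 2.6 to conclude it is a $C^*$-algebra of compact operators, then apply Lemma 2.5 to pass to $\mathcal{A}$. The only difference is that you spell out the stability of $\mathcal{A}\otimes K(H)$ via $K(H)\otimes K(H)\cong K(H)$, which the paper treats as immediate.
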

\begin{proof}
Since $\mathcal{A}\otimes K(H)$ is stable, it is a $C^*$-algebra of compact operators,
 and so is $\mathcal{A}$ by Lemma 2.5.
\end{proof}

It is well known that if $\mathcal{A}$ is a $W^*$-algebra and $E$
is a selfdual Hilbert $\mathcal{A}$-module, then $B(E)$ is a
$W^*$-algebra. The converse is not true, as for $E=\mathcal{A}=
c_0$, $B(E)=\ell^\infty$ is a $W^*$-algebra \cite{Rie}. However,
if $\mathcal{A}$ is a $C^*$-algebra of compact operators on some
Hilbert space, then $B(E)$ is a $W^*$-algebra, for every Hilbert
$\mathcal{A}$-module $E$ \cite{Bak}. Here we show the converse.

Recall that the $C^*$-algebra $K(E)$ of compact operators on $E$
is generated by rank one operators $\theta_{\xi,\eta}(\zeta)=\xi
\langle \eta, \zeta \rangle$, for $\ \xi, \eta \in E$, and the
multiplier algebra $M(K(E))$ is isomorphic to $B(E)$. Also, if $H$
is a separable infinite dimensional Hilbert space, then $E=H
\otimes  \mathcal{A}$ is a Hilbert $C^*$-module over $\mathbb
C\otimes \mathcal A=\mathcal A$, denoted by $H_{\mathcal A}$.
It plays an
 important role in the theory of Hilbert $C^*$-modules.

\begin{them}
For any $C^*$-algebra $\mathcal{A}$, the following are equivalent:

($i$) $\mathcal{A}$ is a $C^*$-algebra of compact operators,

($ii$) $B(E)$ is a $W^*$-algebra, for each Hilbert
$\mathcal{A}$-module $E$,

 ($iii$) $B(H_{\mathcal A})$ is a $W^*$-algebra.
\end{them}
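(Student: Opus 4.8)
The plan is to prove the cycle of implications $(i)\Rightarrow(ii)\Rightarrow(iii)\Rightarrow(i)$, since $(i)\Rightarrow(ii)$ is essentially the cited result of Bakić--Guljaš and $(ii)\Rightarrow(iii)$ is trivial (take $E=H_{\mathcal A}$). So the real content is $(iii)\Rightarrow(i)$, and the strategy is to reduce it to Theorem 2.6, the stable case already established.

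First I would identify the relevant stable $C^*$-algebra. Recall that for $E=H_{\mathcal A}$ one has $K(H_{\mathcal A})\cong K(H)\otimes\mathcal A\cong\mathcal A\otimes K(H)$, which is stable, and $B(H_{\mathcal A})\cong M(K(H_{\mathcal A}))=M(\mathcal A\otimes K(H))$. Thus hypothesis $(iii)$ says exactly that $M(\mathcal A\otimes K(H))$ is a $W^*$-algebra. Now $\mathcal A\otimes K(H)$ is a stable $C^*$-algebra whose multiplier algebra is a $W^*$-algebra, so Theorem 2.6 applies and yields that $\mathcal A\otimes K(H)$ is a $C^*$-algebra of compact operators.

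Finally I would invoke Lemma 2.5: if $\mathcal A\otimes K(H)$ is a $C^*$-algebra of compact operators, then so is $\mathcal A$. This gives $(i)$ and closes the cycle. In fact this last step, together with the identification in the previous paragraph, is precisely the statement of Corollary 2.7, so the proof of $(iii)\Rightarrow(i)$ can be written as a one-line appeal to Corollary 2.7.

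I do not expect a serious obstacle here, since all the machinery is in place; the only care needed is in the bookkeeping of the standard identifications $K(H_{\mathcal A})\cong\mathcal A\otimes K(H)$ and $M(K(E))\cong B(E)$, both recalled just before the theorem statement. One should also make sure the cited fact $(i)\Rightarrow(ii)$ is quoted correctly from \cite{Bak} (that $B(E)$ is a $W^*$-algebra for every Hilbert module $E$ over a $C^*$-algebra of compact operators), and note that for the implication $(i)\Rightarrow(ii)$ no stability or separability hypothesis is needed. So the write-up is short: prove $(i)\Rightarrow(ii)$ by citing \cite{Bak}, observe $(ii)\Rightarrow(iii)$ is immediate, and derive $(iii)\Rightarrow(i)$ from Corollary 2.7 via the isomorphism $B(H_{\mathcal A})\cong M(\mathcal A\otimes K(H))$.
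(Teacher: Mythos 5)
Your proposal is correct and matches the paper's own argument: the paper likewise notes that only $(iii)\Rightarrow(i)$ needs proof (with $(i)\Rightarrow(ii)$ coming from the cited Baki\'{c}--Gulja\v{s} result and $(ii)\Rightarrow(iii)$ being immediate), identifies $B(H_{\mathcal A})\cong M(K(H)\otimes\mathcal A)$ via $K(H_{\mathcal A})\cong K(H)\otimes\mathcal A$, and concludes by Corollary 2.7. No substantive difference.
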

\begin{proof}
It is enough to show that ($iii$) implies ($i$). Since
$$K(H_{\mathcal A})=K(H \otimes  \mathcal{A}) \cong K(H) \otimes K( \mathcal{A})=
K(H) \otimes  \mathcal{A}  $$
 we have $B(H_{\mathcal A}) \cong M(K(H) \otimes  \mathcal{A} )$.
 By assumption, $B(H_{\mathcal A})$ is a
 $W^*$-algebra and so $\mathcal{A}$ is a $C^*$-algebra of compact
 operators by Corollary 2.7.

\end{proof}

J. Schweizer in \cite{Sch} remarked that for a $C^*$-algebra
$\mathcal{A}$, some problems on Hilbert $\mathcal{A}$-modules can
be reformulated as  problems on right ideals of $\mathcal{A}$,
since submodules of a full Hilbert $\mathcal{A}$-module are in a
bijective correspondence with the closed right ideals of
$\mathcal{A}$. Therefore, one may wonder if the previous
result could be reformulated in the language of right ideals. Actually,
if $I$ is a (closed) right ideal of $\mathcal{A}$, then $I$ is a
right Hilbert $\mathcal{A}$-module with inner product
$\langle a, b \rangle = a^* b$, for $a, b \in I$, and in this case, $K(E)$
equals to the hereditary $C^*$-algebra $I \cap I^*$ and so
$B(E)= M(I\cap I^*)$. Therefore, one may expect that  $C^*$-algebras
$\mathcal{A}$ of compact operators may be characterized by the
property that for  every hereditary $C^*$-subalgebra $\mathcal{B}$
of $\mathcal{A}$, $M(\mathcal{B})$ is a $ W^*$-algebra.

Unfortunately, this is not the case for non separable
$C^*$-algebras, as the following counterexample shows. However,
if $\mathcal{A}$ is separable and $p$ is a projection as in the
proof of Theorem 2.6, then $p \mathcal{A} p$ is a separable
$W^*$-algebra, hence finite dimensional (also see Theorem 2.8 in
\cite{Ake}).

\begin{Exam} For the Stone-Cech
compactification $\beta \mathbb{N}$ of the natural numbers, the algebra of continuous functions
$C(\beta\mathbb{N})$ is a $W^*$-algebra. Let $x$ be any point of $\beta
\mathbb{N}$ that is not a natural number and let $\mathcal{A}$ be
the $C^*$-subalgebra of $C(\beta \mathbb{N})$ consisting of those
functions vanishing at $x$. Let $\mathcal{B}$ be a hereditary C*-subalgebra
of $\mathcal{A}$ (which is an ideal, since $\mathcal{A}$ is abelian).
Then there is an open subset $U$ of $\beta \mathbb{N}$ such that
$\mathcal{B}$ consists of functions in $\mathcal{A}$ that
vanish outside $U$. Let $V$ be the closure of $U$. Then $V$ is
also open. For every $c \in C(V )$ we may extend $c$ by zero outside $V$, and thereby
view $C(V )$ as a $W^*$-subalgebra of
$C(\beta \mathbb{N})$. Observe that $M(\mathcal{B}) = C(V )$: clearly $\mathcal{B}$ is an ideal in $C(V )$, so it suffices to
note that for any $0 \neq c \in C(V )$, $c \mathcal{B} \neq 0$. To
see this, we note that $c$ is non-zero on a nonvoid open subset
$W$ of $V$, hence $W \cap U \setminus {x}$ is a nonvoid open set.
Hence there exists a non-zero continuous function $b$ with support
in $W \cap U \setminus {x}$. Thus $b \in B$ and $cb \neq 0$. Therefore
$M(\mathcal{B}) = C(V )$ is a $W^*$-algebra, but $\mathcal{A}$ cannot be a $C^*$-algebra of compact operators.
\end{Exam}

\section{Morita equivalence}

The notion of (strong) Morita equivalence of $C^*$-algebras
was introduced by M. Rieffel in \cite{Rie}. Two $C^*$-algebras $\mathcal{A}$ and
$\mathcal{B}$ are \textit{(strongly) Morita equivalent}
if there is an $\mathcal{A}$-$\mathcal{B}$-bimodule
$\mathcal{M}$, which is a left full Hilbert $C^*$-module over
$\mathcal{A}$, and a right full Hilbert $C^*$-module over
$\mathcal{B}$, such that the inner products
$_{\mathcal{A}}\langle \cdot, \cdot \rangle$ and $\langle \cdot,
\cdot \rangle_{\mathcal{B}}$ satisfy
$_{\mathcal{A}}\langle x, y \rangle z =x \langle y, z
\rangle_{\mathcal{B}}$ for all
 $x,y,z \in \mathcal{M}$.
Such a module $\mathcal{M}$ is called an
$\mathcal{A}$-$\mathcal{B}$-imprimitivity bimodule.

It would be interesting to investigate those properties of $C^*$-algebras which are preserved under
Morita equivalence. These include, among other things nuclearity, being type I, and simplicity \cite{Ara,Bee, Kus, Rae, Rae2, Z, Z2}.
Now if one of the two Morita equivalent $C^*$-algebras is a $W^*$-algebra,
it is natural to ask if so is the other. The answer to this question, as it posed is obviously negative, as Hilbert space $H$ is a
 $K(H)$-$\mathbb{C}$-imprimitivity bimodule, and so
 $C^*$-algebras $K(H)$ and $\mathbb{C}$ are  Morita equivalent. However we may rephrase that question in the following less trivial form.

\begin{Que}
Suppose that $C^*$-algebras $\mathcal{A}$ and $\mathcal{B}$ are
Morita equivalent and the $C^*$-algebra $M(\mathcal{A})$ is a
$W^*$-algebra, is it then true that $M(\mathcal{B})$ is a
$W^*$-algebra?
\end{Que}
By Theorem 2.8, we can show that the above property holds for
$C^*$-algebra $\mathcal{A}$ exactly when $\mathcal{A}$ is a
$C^*$-algebra of compact operators. In fact, we have the following
result.
\begin{them}
Let $\mathcal{A}$ be a $C^*$-algebra such that $M(\mathcal{B})$
is a $W^*$-algebra, for any $C^*$-algebra $\mathcal{B}$ which is
Morita equivalent to $\mathcal{A}$. Then $\mathcal{A}$ is a
$C^*$-algebra of compact operators.
\end{them}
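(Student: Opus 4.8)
The plan is to reduce Theorem~3.3 to Corollary~2.7 by choosing one convenient witness for Morita equivalence. Since the hypothesis allows us to use the conclusion "$M(\mathcal{B})$ is a $W^*$-algebra" for \emph{every} $\mathcal{B}$ Morita equivalent to $\mathcal{A}$, it suffices to produce a single such $\mathcal{B}$ for which this conclusion already forces $\mathcal{A}$ to be a $C^*$-algebra of compact operators. The natural candidate is the stabilization $\mathcal{B}=\mathcal{A}\otimes K(H)$ with $H$ a separable infinite-dimensional Hilbert space, precisely because Corollary~2.7 states that $M(\mathcal{A}\otimes K(H))$ being a $W^*$-algebra implies $\mathcal{A}$ is a $C^*$-algebra of compact operators.

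The first step is to verify that $\mathcal{A}$ and $\mathcal{A}\otimes K(H)$ are Morita equivalent. This is the classical stabilization fact: the Hilbert $C^*$-module $H_{\mathcal A}=H\otimes\mathcal A$ discussed before Theorem~2.8 is a right Hilbert $\mathcal{A}$-module which is full on the $\mathcal{A}$-side, since the closed linear span of $\{a^*b:a,b\in\mathcal{A}\}$ is all of $\mathcal{A}$, and the same discussion records that $K(H_{\mathcal A})\cong K(H)\otimes\mathcal A=\mathcal A\otimes K(H)$. Hence $H_{\mathcal A}$, equipped with its canonical $K(H_{\mathcal A})$-valued left inner product and $\mathcal{A}$-valued right inner product, is an $(\mathcal{A}\otimes K(H))$-$\mathcal{A}$-imprimitivity bimodule, so $\mathcal{A}\otimes K(H)$ is Morita equivalent to $\mathcal{A}$.

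The second step is then immediate: applying the hypothesis with $\mathcal{B}=\mathcal{A}\otimes K(H)$ shows that $M(\mathcal{A}\otimes K(H))$ is a $W^*$-algebra, and Corollary~2.7 (with this $H$) yields that $\mathcal{A}$ is a $C^*$-algebra of compact operators, completing the proof. One could also note, conversely, that a $C^*$-algebra of compact operators does have the stated property, since being a $C^*$-algebra of compact operators is preserved under Morita equivalence and such algebras have $W^*$-algebra multipliers; together with Theorem~3.3 this gives the "exactly when" assertion from the introduction, though only the forward direction is claimed here.

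I do not expect a genuine obstacle here: essentially all the real work sits inside Corollary~2.7 (and, through it, Theorem~2.6 and Lemma~2.5). The only points requiring care are the two classical facts about $H_{\mathcal A}$ invoked in the first step, namely that it is full as an $\mathcal{A}$-module and that its algebra of compact operators is exactly $\mathcal{A}\otimes K(H)$, so that the stabilization indeed lies in the class of $C^*$-algebras to which the hypothesis may be applied.
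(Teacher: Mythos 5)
Your proof is correct and is essentially the paper's own argument: the paper takes $\mathcal{B}=K(H_{\mathcal A})$, which is exactly your $\mathcal{A}\otimes K(H)$ via the isomorphism $K(H_{\mathcal A})\cong K(H)\otimes\mathcal{A}$, and invokes Theorem~2.8, whose proof in turn reduces to Corollary~2.7 by the same identification you use. The only cosmetic difference is that you cite Corollary~2.7 directly rather than passing through Theorem~2.8.
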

\begin{proof}
Let $\mathcal{B}=K(H_\mathcal{A})$. Since $H_\mathcal{A}$ is a
full Hilbert $\mathcal{A}$-module, then $\mathcal{B}$ is Morita
equivalent to $\mathcal{A}$. By assumption,
$B(H_\mathcal{A})\cong M(\mathcal{B})$ is a $W^*$-algebra, hence
 $\mathcal{A}$ is a $C^*$-algebra of compact operators, by Theorem 2.8.
\end{proof}

However, we give an affirmative answer to the above question, when
both $C^*$-algebras are unital.

Recall that a Hilbert $C^*$-module $E$ on a $C^*$-algebra $\mathcal{A}$ is called \textit{self dual} if for
every bounded linear $\mathcal{A}$-module map $\varphi: E\rightarrow \mathcal{A}$
there is an element $y \in E$ such that $\varphi(\cdot) = \langle y, \cdot \rangle$.

\begin{lem}
Let $E$ be a right Hilbert $C^*$-module over a $C^*$-algebra
$\mathcal{A}$ such that $K(E)$ is unital. then\\
 $(i)$ $E$ is self dual.\\
 $(ii)$ $B(E)$ is a $W^*$-algebra, whenever $\mathcal{A}$ is a $W^*$-algebra.
\end{lem}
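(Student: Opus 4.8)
The plan is to prove $(i)$ first and then deduce $(ii)$ from it, since self-duality is exactly the bridge that lets $W^*$-structure pass from $\mathcal{A}$ to $B(E)$. For $(i)$, the key observation is that $K(E)$ being unital means the identity operator $1_{B(E)}$ lies in $K(E)$, so that $B(E) = M(K(E)) = K(E)$ and in particular $B(E) = K(E)$ acts unitally on $E$. The standard route to self-duality is then to exhibit, for a given bounded $\mathcal{A}$-module map $\varphi : E \to \mathcal{A}$, an approximate form $\varphi(\cdot) = \langle y, \cdot \rangle$ and pass to a limit. Concretely: since $1_{B(E)} \in K(E)$, there is a finite sum of rank-one operators $\sum_{j} \theta_{\xi_j,\eta_j}$ close to $1$, and for any $\zeta \in E$ we have $\zeta \approx \sum_j \xi_j \langle \eta_j, \zeta\rangle$; applying $\varphi$ and using the module property gives $\varphi(\zeta) \approx \sum_j \varphi(\xi_j)\langle \eta_j,\zeta\rangle$, which already has the form $\langle y_\varepsilon, \zeta\rangle$ with $y_\varepsilon = \sum_j \eta_j \varphi(\xi_j)^*$. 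To make this exact rather than approximate, I would instead use that the unit is genuinely in $K(E)$ (not merely approximated), write $1 = \lim$ of such finite sums and check that the corresponding $y_\varepsilon$ form a Cauchy net in $E$ (using boundedness of $\varphi$), whose limit $y$ satisfies $\varphi(\cdot) = \langle y,\cdot\rangle$ exactly. Alternatively, and more cleanly, one can invoke that a Hilbert module $E$ with $K(E)$ unital is finitely generated projective, hence self-dual by the classical result of Paschke.

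For $(ii)$, assume $\mathcal{A}$ is a $W^*$-algebra. The cleanest argument uses Paschke's theorem: for a self-dual Hilbert module $E$ over a $W^*$-algebra $\mathcal{A}$, every bounded operator on $E$ is automatically adjointable, so $B(E) = \mathcal{L}(E)$ coincides with the full algebra of bounded module maps, and Paschke showed this is a $W^*$-algebra (it has a predual, namely the space generated by the functionals $T \mapsto f(\langle x, Ty\rangle)$ for $x,y \in E$ and $f$ in the predual of $\mathcal{A}$). Since part $(i)$ gives that $E$ is self-dual, this applies directly and $(ii)$ follows. If one prefers to avoid quoting Paschke wholesale, one can instead argue that $K(E)$ unital forces $E$ to be an orthogonal summand of $\mathcal{A}^n$ for some $n$ (again using the finitely-generated-projective structure over a $W^*$-algebra, where the relevant projection in $M_n(\mathcal{A})$ can be found), whence $B(E) = K(E)$ is a corner $p M_n(\mathcal{A}) p$ of the $W^*$-algebra $M_n(\mathcal{A})$, and corners of $W^*$-algebras are $W^*$-algebras.

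The main obstacle is the passage from the approximate identity $\sum_j \theta_{\xi_j,\eta_j} \to 1$ in $K(E)$ to an exact expression for $\varphi$: one must be careful that the candidate elements $y_\varepsilon$ actually converge in the norm of $E$ rather than merely weakly, and this is where unitality of $K(E)$ (as opposed to mere existence of an approximate unit, which every $K(E)$ has) is doing real work — it guarantees the relevant net is genuinely Cauchy. For this reason I expect the cleanest write-up to lean on the structural fact that $K(E)$ unital $\Longleftrightarrow$ $E$ finitely generated projective, reducing $(i)$ to Paschke's classical self-duality result and $(ii)$ to the observation that $B(E)$ is then a hereditary corner (indeed a full corner) of a matrix algebra over $\mathcal{A}$.
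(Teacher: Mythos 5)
Your proposal is correct and follows essentially the same route as the paper: express the unit of $K(E)$ via rank-one operators $\theta_{x_i,y_i}$, pull $\varphi$ through to obtain $\varphi(\cdot)=\langle \sum_i y_i\varphi(x_i)^*,\cdot\rangle$, and then invoke Paschke's result that $B(E)$ is a $W^*$-algebra for a self-dual module over a $W^*$-algebra. The only divergence is that the paper writes $1=\sum_{i=1}^n\theta_{x_i,y_i}$ as an \emph{exact} finite sum (which is justified by the standard trick: if $T=\sum_i\theta_{\xi_i,\eta_i}$ satisfies $\|1-T\|<1$, then $T$ is invertible in $B(E)$ and $1=T^{-1}T=\sum_i\theta_{T^{-1}\xi_i,\eta_i}$), whereas you approximate and pass to a norm limit of the elements $y_\varepsilon$ --- both arguments are valid.
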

\begin{proof}
By hypothesis there are elements $x_1,\cdot\cdot\cdot,x_n$ and
$y_1,\cdot\cdot\cdot,y_n$ in $E$ such that $\sum_{i=1}^n
\theta_{x_i,y_i}=1 \in K(E)$. Thus, for every bounded linear
$\mathcal{A}$-module map
 $\varphi: E\rightarrow \mathcal{A}$ and $x \in E$ we have
 \begin{align*}
 \varphi(x)&=\varphi(\sum_{i=1}^n \theta_{x_i,y_i}x)= \varphi(\sum_{i=1}^n x_i \langle y_i, x\rangle)
 = \sum_{i=1}^n \varphi(x_i) \langle y_i, x \rangle
 \\& = \sum_{i=1}^n  \langle y_i \varphi(x_i)^*, x \rangle =
  \langle \sum_{i=1}^n y_i \varphi(x_i)^*, x \rangle.
  \end{align*}
  Therefore $\varphi(x)=\langle y, x \rangle$,
  where $y=\sum_{i=1}^n y_i \varphi(x_i)^*$. Hence $E$ is selfdual.

Now ($ii$) follows from ($i$) and  \cite[Proposition 3.10]{Pas}.
\end{proof}

Now  if $E$ is an
 $\mathcal{A}$-$\mathcal{B}$-imprimitivity bimodule, then $\mathcal{A}\cong
K_{\mathcal{B}}(E)$ and $\mathcal{B}\cong K_{\mathcal{A}}(E)$.
Therefore, the following partial answer to the above question follows from the above lemma.
\begin{them}
Suppose that unital $C^*$-algebras $\mathcal{A}$ and
$\mathcal{B}$ are Morita equivalent. Then $\mathcal{A}$ is a
$W^*$-algebra if and only if $\mathcal{B}$ is a $W^*$-algebra.
\end{them}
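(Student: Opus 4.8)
The plan is to use the setup established just before the theorem: since $\mathcal{A}$ and $\mathcal{B}$ are Morita equivalent, there is an $\mathcal{A}$-$\mathcal{B}$-imprimitivity bimodule $E$, and under this identification $\mathcal{A} \cong K_{\mathcal{B}}(E)$ while $\mathcal{B} \cong K_{\mathcal{A}}(E)$. The key observation is that the hypothesis that $\mathcal{A}$ is unital says precisely that $K_{\mathcal{B}}(E)$ is unital, so Lemma 3.3 applies to the right Hilbert $\mathcal{B}$-module $E$. Symmetrically, $\mathcal{B}$ unital says $K_{\mathcal{A}}(E)$ is unital, so Lemma 3.3 also applies to the left module (viewed as a right Hilbert $\mathcal{A}$-module $\tilde E$, the conjugate module). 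Thus the whole theorem reduces to one implication plus its symmetric counterpart.

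So first I would assume $\mathcal{A}$ is a $W^*$-algebra and prove $\mathcal{B}$ is one. View $E$ as a right Hilbert $C^*$-module over $\mathcal{A}$; then $K(E) = K_{\mathcal{A}}(E) \cong \mathcal{B}$ is unital by hypothesis, and $\mathcal{A}$ is a $W^*$-algebra, so Lemma 3.3(ii) gives that $B(E)$ is a $W^*$-algebra. But since $K(E)$ is unital we have $B(E) = M(K(E)) = K(E) \cong \mathcal{B}$ (a unital $C^*$-algebra equals its own multiplier algebra, and more directly $1 \in K(E)$ forces $B(E) = K(E)$). Hence $\mathcal{B}$ is a $W^*$-algebra. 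For the converse, swap the roles of $\mathcal{A}$ and $\mathcal{B}$: an $\mathcal{A}$-$\mathcal{B}$-imprimitivity bimodule $E$ gives rise to a $\mathcal{B}$-$\mathcal{A}$-imprimitivity bimodule (the conjugate/dual bimodule $\tilde E$), so the identical argument with $\mathcal{A}$ and $\mathcal{B}$ interchanged yields that $\mathcal{A}$ is a $W^*$-algebra whenever $\mathcal{B}$ is.

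The main point to get right is the bookkeeping of which algebra acts as coefficients and which arises as the compacts, i.e. making sure that when we feed $E$ to Lemma 3.3 the "$\mathcal{A}$" of that lemma is the $W^*$-algebra and the "$K(E)$" of that lemma is the algebra we want to conclude about — and that unitality of the latter is exactly our standing hypothesis. There is no real analytic obstacle here: Lemma 3.3 already packages the self-duality argument and Paschke's result, and the imprimitivity bimodule identifications $\mathcal{A} \cong K_{\mathcal{B}}(E)$, $\mathcal{B} \cong K_{\mathcal{A}}(E)$ are standard and quoted in the text. The only thing worth a sentence is the remark that a unital $C^*$-algebra $C$ satisfies $M(C) = C$, so that $B(E) = M(K(E)) = K(E)$ when $K(E)$ is unital; this is what converts the $W^*$-conclusion about $B(E)$ into a $W^*$-conclusion about $\mathcal{B}$ (resp.\ $\mathcal{A}$).
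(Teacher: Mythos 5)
Your proposal is correct and follows exactly the route the paper intends: the paper derives Theorem 3.4 in one line from Lemma 3.3 together with the identifications $\mathcal{A}\cong K_{\mathcal{B}}(E)$ and $\mathcal{B}\cong K_{\mathcal{A}}(E)$, and you have simply spelled out the bookkeeping (including the needed observation that a unital $K(E)$ forces $B(E)=M(K(E))=K(E)$, and the passage to the conjugate module for the symmetric implication). No gaps.
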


A similar result can be proved for operator algebras. Let
$\mathcal{A}$ and $\mathcal{B}$ be operator algebras. We say that
$\mathcal{A}$ and $\mathcal{B}$ are (strongly) Morita equivalent
if they are Morita equivalent in the sense of Blecher, Muhly,
Paulsen \cite{Ble}. In \cite{Ble}, it is proved that two
$C^*$-algebras are (strongly) Morita equivalent (as operator
algebras) if and only if they are Morita equivalent in the sense
of Rieffel.

\begin{them}
Suppose that unital operator algebras $\mathcal{A}$ and
$\mathcal{B}$ are Morita equivalent. Then $\mathcal{A}$ is a dual
operator algebra if and only if $\mathcal{B}$ is a dual operator
algebra.
\end{them}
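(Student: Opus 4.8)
The plan is to follow the proof of Theorem 3.4, replacing Hilbert $C^*$-modules over $C^*$-algebras by operator modules over operator algebras in the sense of \cite{Ble}, and to supply one extra ingredient: a corner $e\mathcal{D}e$ of a dual operator algebra $\mathcal{D}$ by an idempotent $e\in\mathcal{D}$ is again a dual operator algebra. First I would take an $\mathcal{A}$-$\mathcal{B}$-equivalence bimodule $E$ implementing the Morita equivalence of \cite{Ble}, so that $\mathcal{A}\cong K_{\mathcal{B}}(E)$ and $\mathcal{B}\cong K_{\mathcal{A}}(E)$ completely isometrically, with $B_{\mathcal{B}}(E)\cong M(K_{\mathcal{B}}(E))$ — the operator-module analogues of the identities recalled before Theorem 3.4. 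Since $\mathcal{A}$ is unital, $K_{\mathcal{B}}(E)\cong\mathcal{A}$ is unital, and because $K_{\mathcal{B}}(E)$ is an ideal in $B_{\mathcal{B}}(E)$ we get $B_{\mathcal{B}}(E)=K_{\mathcal{B}}(E)\cong\mathcal{A}$. Running the argument of Lemma 3.3 verbatim (a nearby finite sum of rank-one operators is invertible in $K_{\mathcal{B}}(E)$, which forces the unit itself to be a finite sum $\sum_{i=1}^{n}\theta_{x_{i},y_{i}}$), one sees that $E$ is a complemented submodule of the column module $C_{n}(\mathcal{B})$; hence, by the argument of Lemma 2.4 applied to this complemented submodule, there is an idempotent $e\in M_{n}(\mathcal{B})$ (using $B_{\mathcal{B}}(\mathcal{B})=M(\mathcal{B})=\mathcal{B}$) with $\mathcal{A}\cong B_{\mathcal{B}}(E)\cong eM_{n}(\mathcal{B})e$ completely isometrically.

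Next I would establish the extra ingredient. Let $\mathcal{D}$ be a dual operator algebra and $e\in\mathcal{D}$ an idempotent. By separate weak$^*$-continuity of the product, the map $\Phi\colon\mathcal{D}\to\mathcal{D}$ given by $\Phi(x)=exe$ is weak$^*$-continuous, completely bounded, and idempotent, so its range $e\mathcal{D}e=\ker(\mathrm{id}_{\mathcal{D}}-\Phi)$ is weak$^*$-closed. Representing $\mathcal{D}$ completely isometrically and weak$^*$-homeomorphically as a weak$^*$-closed subalgebra of some $B(H)$, the corner $e\mathcal{D}e$ becomes a weak$^*$-closed subalgebra of $B(H)$ with identity $e$; moreover for each $k$ the matrix space $M_{k}(e\mathcal{D}e)=(e\otimes I_{k})M_{k}(\mathcal{D})(e\otimes I_{k})$ sits inside $B(H^{(k)})$ with its intrinsic operator-algebra norm, so $e\mathcal{D}e$ is a dual operator algebra.

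Combining the two: if $\mathcal{B}$ is a dual operator algebra then so is $M_{n}(\mathcal{B})$, hence so is $eM_{n}(\mathcal{B})e\cong\mathcal{A}$, and thus $\mathcal{A}$ is a dual operator algebra. The hypothesis is symmetric — the conjugate bimodule $\overline{E}$ is a $\mathcal{B}$-$\mathcal{A}$-equivalence bimodule, and the same argument produces an idempotent $f\in M_{m}(\mathcal{A})$ with $\mathcal{B}\cong fM_{m}(\mathcal{A})f$ completely isometrically — so if $\mathcal{A}$ is a dual operator algebra then so is $\mathcal{B}$. This establishes the stated equivalence.

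The main obstacle I anticipate lies entirely in the first paragraph: transporting Lemmas 3.3 and 2.4 from $C^*$-modules to operator modules over operator algebras, and checking that the resulting isomorphism $\mathcal{A}\cong eM_{n}(\mathcal{B})e$ is genuinely completely isometric and that $e$ is an honest idempotent (which is precisely where unitality of both $\mathcal{A}$ and $\mathcal{B}$ enters). The dual-operator-algebra content in the second paragraph, and the symmetry step in the third, are then routine.
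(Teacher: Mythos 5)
Your proposal takes a genuinely different route from the paper, and it has a gap at exactly the point you flag as ``the main obstacle'': the claim that $\mathcal{A}\cong eM_n(\mathcal{B})e$ \emph{completely isometrically}. The algebraic part is fine: writing $1_{\mathcal{A}}=\sum_{i=1}^n x_iy_i$ (after the invertibility perturbation), the matrix $e=(y_ix_j)_{ij}$ is an idempotent in $M_n(\mathcal{B})$ and the mutually inverse homomorphisms $a\mapsto (y_iax_j)_{ij}$ and $m\mapsto\sum_{i,j}x_im_{ij}y_j$ identify $\mathcal{A}$ with $eM_n(\mathcal{B})e$. But these maps are only completely bounded, with constants governed by $\|(x_1,\dots,x_n)\|_{R_n}\,\|(y_1,\dots,y_n)^{t}\|_{C_n}$, which cannot in general be normalized to $1$; unlike the $C^*$-module situation (Lemmas 2.4 and 3.3), there is no adjoint and no orthogonal complement forcing the embedding $E\hookrightarrow C_n(\mathcal{B})$ to be completely isometric onto a reducing corner. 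Since ``dual operator algebra'' means completely isometrically a dual operator space with separately weak$^*$-continuous product, a cb-isomorphism of $\mathcal{A}$ onto a corner of $M_n(\mathcal{B})$ does not transfer the property: being a dual operator space is not invariant under completely bounded isomorphism. This is not a removable technicality — the failure of the (completely isometric) corner/stable-isomorphism picture for BMP-Morita-equivalent operator algebras is precisely the phenomenon that led Eleftherakis to introduce the strictly stronger notion of $\Delta$-equivalence, so one should not expect the first paragraph of your argument to go through as stated. Your second paragraph (weak$^*$-closedness of $e\mathcal{D}e$ via the idempotent weak$^*$-continuous map $x\mapsto exe$) and the symmetry step are fine.

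For contrast, the paper's proof avoids corners and finite generation altogether. It invokes the representation-theoretic form of BMP equivalence: a normal completely isometric representation $\pi$ of $\mathcal{A}$ on $H$ yields a completely isometric representation $\rho$ of $\mathcal{B}$ on some $K$ and subspaces $X\subseteq B(K,H)$, $Y\subseteq B(H,K)$ with $\pi(\mathcal{A})=\overline{XY}$ and $\rho(\mathcal{B})=\overline{YX}$. Weak$^*$-closedness of $\pi(\mathcal{A})$ gives $X\,\overline{\rho(\mathcal{B})}^{w*}\,Y\subseteq\pi(\mathcal{A})$, hence $\rho(\mathcal{B})\,\overline{\rho(\mathcal{B})}^{w*}\,\rho(\mathcal{B})\subseteq\rho(\mathcal{B})$, and unitality of $\mathcal{B}$ then forces $\overline{\rho(\mathcal{B})}^{w*}=\rho(\mathcal{B})$. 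This uses only the norm-density relations and weak$^*$ continuity of multiplication, exactly the data that BMP equivalence does provide, and is where the unitality hypothesis genuinely enters. If you want to salvage your approach you would need either to prove a completely isometric corner realization under the unitality hypotheses (which would be a substantially stronger structural statement) or to show that the specific cb-isomorphism you construct happens to be a weak$^*$-homeomorphism onto a weak$^*$-closed subalgebra in a completely isometric way; as written, neither is established.
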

\begin{proof}
Let $\pi :\mathcal{A}\rightarrow B(H)$ be a completely isometric
normal representation of $\mathcal{A}$ on some Hilbert space $H$.
Then there exist a completely isometric representation
 $\rho :\mathcal{B}\rightarrow B(K)$ of $\mathcal{B}$ on a Hilbert
 spaces $K$ and subspaces $X \subseteq B(K,H)$, $Y \subseteq B(H,K)$
 such that
  $$\pi (\mathcal{A})X \rho (\mathcal{B}) \subseteq X, \
 \rho (\mathcal{B})Y \pi (\mathcal{A})\subseteq Y, \
  \pi (\mathcal{A})=\overline{XY}^{\|\cdot \|}, \
   \rho (\mathcal{B}) = \overline{YX}^{\|\cdot \|}$$
   Since $\pi$ is normal, we have $\pi(\mathcal{A}) =
   \overline{\pi(\mathcal{A})}^{w*}$. Now $X \rho(\mathcal{B})Y \subseteq \pi(\mathcal{A})$
   implies that $X \overline{\rho(\mathcal{B})}^{w*}Y \subseteq
   \pi(\mathcal{A})$. Therefore
$$YX \overline{\rho(\mathcal{B})}^{w*}YX \subseteq
   Y \pi(\mathcal{A})X \subseteq \rho(\mathcal{B}),$$
   and so $\rho(\mathcal{B}) \overline{\rho(\mathcal{B})}^{w*} \rho(\mathcal{B}) \subseteq
   \rho(\mathcal{B})$.
   Since $\rho(\mathcal{B})$ is a unital algebra we have
$ \overline{\rho(\mathcal{B})}^{w*}  \subseteq \rho(\mathcal{B}),$
hence $ \overline{\rho(\mathcal{B})}^{w*} = \rho(\mathcal{B})$.
Therefore $\mathcal{B}$ is a dual operator algebra.

\end{proof}

\subsection*{Acknowledgment} The authors would like to thank Professor G. Eleftherakis, who suggested
 the proof of Theorem 3.5, and Professor J. Tomiyama for helpful comments.

 \vspace{0.5cm}

% ----------------------------------------------------------------
%\date{}%
%\bibliographystyle{amsplain}
%\bibliography{}

\begin{thebibliography}{99}

\bibitem{A} C. A. Akemann, {\it Left ideal structure
of C*-algebras}, Journal of Functional Analysis \textbf{6} (1970), 305-317.

\bibitem{Ake} C. A. Akemann, G.K. Pedersen, J. Tomiyama, \textit{Multipliers of $C^*$-algebras}, J. Func. Anal. \textbf{13} (1973), 277-301.



\bibitem{Ara} P. Ara, \textit{Morita equivalence and Pedersen ideals}, Proc. Amer. Math. Soc. \textbf{219} (2001), 1041-1049.

\bibitem{AM} P. Ara, M. Mathieu, \textit{Local multipliers of $C^*$-algebras}, Springer, Berlin, 2003.

\bibitem{Aram} L. Arambasic, \textit{Another characterization of Hilbert $C^*$-modules over compact operators}, J. Math. Anal. Appl.
 \textbf{344} (2008), 735-740.

\bibitem{Bak} D. Baki\'{c}, B. Gulja\v{s}, \textit{Hilbert $C^*$-modules over $C^*$-algebras of compact
operators}, Acta Sci. Math. \textbf{68} (2002), 249-269.

\bibitem{Bee} W. Beer,\textit{ On Morita equivalence of nuclear  $C^*$-algebras}, J. Pure Appl. Algebra \textbf{26} (1982), 49-267.

\bibitem{Ble} D.P. Blecher, P. S. Muhly and V. I. Paulsen, \textit{ Categories of operator modules; Morita equivalence
and projective modules}, Memoirs Amer. Math. Soc., \textbf{681},
2000.

\bibitem{B} R.C. Busby, {\it Double centralizers and extensions of $C^*$-algebras}, Trans. Amer. Math. Soc. \textbf{132} (1968), 79-99.

\bibitem{Fra} M. Frank, \textit{Characterizing $C^*$-algebras of compact operators by generic categorical properties
 of Hilbert $C^*$-modules}, J. K-Theory
\textbf{2} (2008), 453-462.

\bibitem{K} I. Kaplansky, {\it The structure of certain operator algebras}, Trans. Amer. Math. Soc. \textbf{70} (1951), 215-255.

\bibitem{Kus} M. Kusuda, \textit{Morita equivalence for $C^*$-algebras with the weak Banach-Saks property}, Quart.
J. Math. \textbf{52} (2001), 455-461.

\bibitem{Lan} E.C. Lance, \textit{Hilbert $C^*$-modules}, Camb. Univ. Press, Cambridge, 1995.

\bibitem{Mag} B. Magajna, \textit{Hilbert $C^*$-modules in which all closed submodules are complemented},
 Proc. Amer. Math. Soc. \textbf{125} (1997), 849-852.

\bibitem{Min} J.A. Mingo, \textit{K-Theory and multipliers of stable $C^*$-algebras},
 Tran. Amer. Math. Soc. \textbf{299} (1987), 397-411.

\bibitem{Pas} W. L. Paschke, \textit{Inner product modules over B*-algebras}, Trans. Amer. Math. Soc. \textbf{182} (1973), 443-468.

\bibitem{Rae} I. Raeburn, D. P. Williams, \textit{Morita equivalence and continuous-trace $C^*$-algebras},
Math. Surveys \textbf{60}, Amer. Math. Soc., Providence, 1998.

\bibitem{Rae2} A. an Huef, I. Raeburn, and D. P. Williams,
\textit{Properties preserved under Morita equivalence of $C^*$-algebras},
Proc. Amer. Math. Soc. \textbf{135} (2007), 1495-1503.

\bibitem{Rie} M. A. Rieffel, \textit{Morita equivalence for  $C^*$-algebras and  $W^*$-algebras}, J. Pure Appl. Alg. \textbf{5} (1974), 51–96.

\bibitem{Sch} J. Schweizer, \textit{A description of Hilbert $C^*$-modules in which all closed submodules are orthogonally closed},
 Proc. Amer. Math. Soc. \textbf{127} (1999), 2123-2125.

\bibitem{Z} H.H. Zettl, \textit{Strong Morita equivalence of $C^*$-algebras preserves nuclearity}, Arch. Math.
\textbf{38} (1982), 448–452.

\bibitem{Z2} H.H. Zettl, \textit{Ideals in Hilbert modules and invariants under strong Morita equivalence of $C^*$-
algebras}, Arch. Math.  \textbf{39} (1982), 69–77.





\end{thebibliography}
\end{document}